\newtheorem{theorem}{Theorem}
\newtheorem{thm}[theorem]{Theorem}
\newtheorem{lem}[theorem]{Lemma}
\newtheorem{prop}[theorem]{Proposition}
\newtheorem{rmk}[theorem]{Remark}
\newcommand{\fm}{{\mathfrak m}}
\newcommand{\ba}{{\vec a}}
\newcommand{\Q}{{\mathbb{Q}}}
\newcommand{\Z}{{\mathbb{Z}}}
\newcommand{\cO}{{\mathcal{O}}}
\newcommand{\SL}{\operatorname{SL}}
\newcommand{\GL}{\operatorname{GL}}
\newcommand{\Tr}{\operatorname{Tr}}
\def\mm#1#2#3#4{\begin{pmatrix}#1 & #2 \\ #3 & #4\end{pmatrix}}
\begin{document}

\begin{abstract}
If $R$ is the ring of integers of a number field, then there exists a \emph{polynomial parametrization} of the set $\SL_2(R)$, i.e., an element
$A\in \SL_2(\Z[x_1,\ldots,x_n])$ such that every element of $\SL_2(R)$ is obtained by specializing $A$ via some homomorphism $\Z[x_1,\ldots,x_n]\to R$.
\end{abstract}

\title{Polynomial Parametrization for $\SL_2$ over Quadratic Number Rings}

\author{Michael Larsen}

\address{Department of Mathematics \\
          Indiana University \\
Bloomington, Indiana 47405, USA}

\email{\href{mailto:mjlarsen@indiana.edu}{\tt mjlarsen@indiana.edu}}

\author{Dong Quan Ngoc Nguyen}

\address{Department of Applied and Computational Mathematics and Statistics \\
         University of Notre Dame \\
         Notre Dame, Indiana 46556, USA }

\email{\href{mailto:dongquan.ngoc.nguyen@nd.edu}{\tt dongquan.ngoc.nguyen@nd.edu}}

\date{August 16, 2018}

\thanks{ML was partially supported by NSF Grant DMS-1702152. \\
DQNN was partially supported by DARPA grant N66001-17-1-4041.}

\maketitle

Let $R$ be a commutative ring.  We say a subset $S\subset \SL_N(R)$ is \emph{bounded} if there exists an element $A(x_1,\ldots,x_n)\in \SL_N(\Z[x_1,\ldots,x_n])$
such that
$$S\subseteq \{A(r_1,\ldots,r_n)\mid r_i\in R\},$$
and $A(x_1,\ldots,x_n)=I$ has a solution in $R^n$, where $I$ denotes the identity matrix. It is clear that if $S$ and $T$ are bounded subsets of $\SL_N(R)$, then every subset of $S$ is bounded, and likewise, 
$S\cup \{I\}$, $S^{-1}$ and $ST$ are bounded.  Thus, $S\cup T \subset (S\cup\{I\})(T\cup\{I\})$ is also bounded.
When $\SL_N(R)$ itself is bounded, we say it is \emph{polynomially parametrized}.

For $1\le i\neq j\le N$, the set of elementary matrices $\{e_{ij}^r\mid r\in R\}$, with entry $r$ in position $(i,j)$, is bounded.
Therefore the set of all elementary matrices (i.e., the union of these sets over pairs $(i,j)$) is again bounded, so for 
any fixed $k$, the set of products of $k$ elementary matrices is bounded.  
Carter and Keller \cite{CK} proved that if $N\ge 3$ and $R$ is the ring of integers in any number field, then every element of $\SL_N(R)$ can  be written as a 
product of $k$ elementary matrices, for $k$ depending on $N$ and $R$.  Thus $\SL_N(R)$ is  polynomially parametrized.
This leaves the question as to whether $\SL_2(R)$ is likewise always polynomially parametrized.

When $R^\times$ is infinite, this is known to have an affirmative answer.
Vaserstein \cite{V1}  proved that in this case $\SL_2(R)$ is generated by elementary matrices, and
Carter, Keller, and Paige \cite{M} proved that this implies that $\SL_2(R)$ is indeed polynomially parametrized (see also recent work by Morgan, Rapinchuk, and Sury \cite{MRS} for another proof of this fact.)  Vaserstein also proved \cite{V2}  that $\SL_2(\Z)$ is polynomially parametrized.
This leaves the case of rings of integers in imaginary quadratic fields.  The point of this note is to show
that the methods of Carter, Keller in \cite{CK} and Vaserstein in \cite{V2} extend to cover
this case as well.

We say $Z\subseteq \SL_2(R)^2$ is \emph{bounded}  if there exist bounded sets $S$ and $T$ in $\SL_2(R)$ such that for all 
pairs $(M,N)\in Z$ there exist $X\in S$ and $Y\in T$ such that 
$$N = X M Y.$$
In particular, $\{(I,X)\mid X\in S\}$ is bounded if and only if $S$ is bounded in $\SL_2(R)$.

\begin{lem}
\label{equiv}
We have the following boundedness statements for $\SL_2(R)^2$:
\begin{enumerate}
\item The set $\{(M,M)\mid M \in \SL_2(R)\}$ is bounded.  
\item If $Z\subseteq \SL_2(R)^2$ is bounded, then $\{(M,N)\mid (N,M)\in Z\}$ is bounded.
\item If $Z,W\subseteq  \SL_2(R)^2$ are bounded, then the set of pairs $(M,P)\in \SL_2(R)^2$ such that there exists $N$ with $(M,N)\in Z$ and $(N,P)\in W$ is bounded.
\item The set of pairs $\{(M,XMY)\mid M\in \SL_2(R),\,X,Y\in \SL_2(\Z)\}$ is bounded.
\item The set $\{(M^{-1},M^T)\mid M\in \SL_2(R)\}$ is bounded.
\end{enumerate}
\end{lem}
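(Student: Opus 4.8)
The plan is to prove the five parts of Lemma~\ref{equiv} more or less in the order stated, since each is either immediate or reduces quickly to the observation (made in the text preceding the lemma) that products, inverses, and unions of bounded subsets of $\SL_2(R)$ are again bounded, together with the transitivity of the relation defining boundedness for subsets of $\SL_2(R)^2$.

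For part (1), the set $\{(M,M)\mid M\in\SL_2(R)\}$ is witnessed by taking $S=T=\{I\}$, writing $M = I\cdot M\cdot I$; the condition that the parametrizing polynomial hits $I$ is trivially satisfied. Part (2) is a symmetry statement: if $N=XMY$ with $X\in S$, $Y\in T$, then $M = X^{-1}NY^{-1}$, and since $S^{-1}$ and $T^{-1}$ are bounded whenever $S,T$ are, the reversed relation is bounded with witnessing sets $S^{-1}$ and $T^{-1}$ (in the appropriate order). Part (3) is transitivity/composition: if $(M,N)\in Z$ is witnessed by $S_1,T_1$ and $(N,P)\in W$ by $S_2,T_2$, then $P = S_2 N T_2 = S_2 S_1 M T_1 T_2$, so the composite relation is witnessed by the bounded sets $S_2 S_1$ and $T_1 T_2$. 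Part (4) is the special case of boundedness of $\{(M,XMY)\mid X,Y\in\SL_2(\Z)\}$: here one needs that $\SL_2(\Z)$ is itself a bounded subset of $\SL_2(R)$, which follows from Vaserstein's theorem \cite{V2} that $\SL_2(\Z)$ is polynomially parametrized, so we may take $S=T=\SL_2(\Z)$.

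The only part requiring a genuine (if small) computation is part (5), the claim that $\{(M^{-1},M^T)\mid M\in\SL_2(R)\}$ is bounded. The key point is the identity, valid for $M=\mm{a}{b}{c}{d}\in\SL_2(R)$, that
$$M^T = \mm{a}{c}{b}{d} = w\,M^{-1}\,w^{-1}, \qquad w = \mm{0}{-1}{1}{0},$$
since $M^{-1}=\mm{d}{-b}{-c}{a}$ and conjugation by $w$ swaps the off-diagonal entries with a sign that cancels. As $w\in\SL_2(\Z)$ and $\SL_2(\Z)$ is bounded by \cite{V2}, the relation $(M^{-1},M^T)$ is of the form $(P, wPw^{-1})$ with $P$ ranging over $\SL_2(R)$, hence bounded with witnessing sets $S=\{w\}$ and $T=\{w^{-1}\}$.

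I expect no serious obstacle: the content of the lemma is bookkeeping that packages the elementary closure properties of bounded sets into a convenient relational calculus on $\SL_2(R)^2$, and the one nontrivial input is the already-cited fact that $\SL_2(\Z)$ is polynomially parametrized. The mild care needed is to keep track of the order of multiplication (left vs.\ right factors) consistently across parts (2) and (3), and in part (5) to verify the conjugation identity with the correct signs; neither is difficult.
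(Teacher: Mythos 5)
Your proof is correct and follows essentially the same route as the paper: parts (1)--(4) are handled by the same closure properties of bounded sets (inverses, products, and the boundedness of $\SL_2(\Z)$), and part (5) rests on the same conjugation identity $M^T = wM^{-1}w^{-1}$ with $w=\mm 0{-1}10$, which agrees with the paper's version since $w^{-1}=-w$ and $-I$ is central. The signs in your computation check out, so there is nothing to add.
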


\begin{proof}
Part (1) is trivial.  Part (2) follows from the fact that if $S\subseteq \SL_2(R)$ is bounded, then $S^{-1}$ is bounded.
Part (3) follows from the fact that if $S,T\subseteq \SL_2(R)$ are bounded, then $ST$ is bounded.
Part (4) follows from the boundedness of $\SL_2(\Z)$.   Part (5) follows from (4), together with the identity
$$M^T = \begin{pmatrix}0&1\\-1&0\end{pmatrix}M^{-1}\begin{pmatrix}0&-1\\1&0\end{pmatrix}.$$
\end{proof}

All ordered pairs of elements of $\SL_2(R)$ whose first rows coincide forms a bounded family.  This follows from the boundedness of the set of elementary
matrices in $\SL_2(R)$.

An ordered pair $(a,b)$ is \emph{primitive} if and only if the elements $a$ and $b$ generate the unit ideal or, equivalently, if and only if there exists an element of $\SL_2(R)$ whose first row is $\begin{pmatrix} a&b\end{pmatrix}$.  We say a set of ordered pairs $((a,b), (a',b'))$ of primitive pairs is \emph{bounded} if 
the set of pairs $(M,M')$, where $M$ and $M'$ have first rows $\begin{pmatrix} a&b\end{pmatrix}$ and $\begin{pmatrix} a'&b'\end{pmatrix}$ respectively,
is bounded.  We indicate this boundedness condition informally by writing $(a,b)\sim (a',b')$ for pairs in the set.

Given a fixed ring $R$, for polynomials $P_1,Q_1,P_2,Q_2\in \Z[t_1,\ldots,t_k]$ the relation $(P_1,Q_1)\sim (P_2,Q_2)$
we mean the following.  
First, for any $\ba := (a_1,\ldots,a_k)\in R^k$, 
the pair $(P_1(\ba),Q_1(\ba))$ is primitive if and only if the pair $(P_2(\ba),Q_2(\ba))$
is primitive.
Second, the set of pairs $(X_1,X_2)\in \SL_2(R)^2$ such that for some $\ba \in R^k$, the first row of 
$X_i$ is 
$$\begin{pmatrix}  P_i(\ba)& Q_i(\ba)\end{pmatrix},$$
for $i=1,2$,
is bounded.
By Lemma~\ref{equiv}, this makes $\sim$ an equivalence relation on $\Z[x_1,\ldots,x_k]^2$.
\begin{lem}
\label{symbol}
For every ring $R$, we have
$$(t_1,t_2)\sim (t_1,t_2+t_1t_3),$$
and
$$(t_1,t_2)\sim (t_1+t_2t_3,t_2).$$
\end{lem}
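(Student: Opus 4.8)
The plan is to verify each of the two relations directly, exploiting the fact recorded in the excerpt that the set of elementary matrices in $\SL_2(R)$ is bounded. First I would dispose of the primitivity requirement, which is trivial: for any $\ba=(a_1,a_2,a_3)\in R^3$ one has $a_1R+a_2R=a_1R+(a_2+a_1a_3)R=(a_1+a_2a_3)R+a_2R$, so in both relations the two pairs of polynomials generate the same ideal at every specialization, hence are primitive simultaneously. This leaves only the boundedness of the corresponding sets of pairs $(X_1,X_2)\in\SL_2(R)^2$.

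For the first relation, suppose $X_1$ has first row $(a_1,a_2)$ and $X_2$ has first row $(a_1,a_2+a_1a_3)$ for some $\ba\in R^3$. Right multiplication by the elementary matrix $e_{12}^{a_3}$ carries out exactly the column operation that replaces the first row $(a_1,a_2)$ by $(a_1,a_2+a_1a_3)$, so $X_1e_{12}^{a_3}$ and $X_2$ have the same first row. A one-line computation with $2\times 2$ matrices of determinant $1$ shows that any two elements of $\SL_2(R)$ with equal first rows differ on the left by an elementary matrix in position $(2,1)$; in particular $X_2=e_{21}^{r}X_1e_{12}^{a_3}$ for some $r\in R$. Taking $S=\{e_{21}^{r}\mid r\in R\}$ and $T=\{e_{12}^{s}\mid s\in R\}$, both bounded, we conclude that the set of such pairs $(X_1,X_2)$ is bounded, which is precisely $(t_1,t_2)\sim(t_1,t_2+t_1t_3)$.

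The second relation is handled in the same way, with $e_{12}^{a_3}$ replaced by $e_{21}^{a_3}$: right multiplication by $e_{21}^{a_3}$ sends first row $(a_1,a_2)$ to $(a_1+a_2a_3,a_2)$, and the identity above then gives $X_2=e_{21}^{r}X_1e_{21}^{a_3}$ with both outer factors lying in the bounded set of elementary matrices. I do not expect a genuine obstacle here; the only thing requiring actual verification is the elementary-matrix identity for $\SL_2$-matrices sharing a first row, and Lemma~\ref{equiv} is not invoked inside this argument at all — it is needed only (as in the text preceding the statement) to license treating $\sim$ as an equivalence relation.
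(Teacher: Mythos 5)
Your proposal is correct and follows the same route as the paper: primitivity is preserved because the ideals coincide, and boundedness comes from right multiplication by the appropriate elementary matrix together with the fact (already recorded in the paper) that two elements of $\SL_2(R)$ sharing a first row differ by a left factor $e_{21}^{r}$. You have simply made explicit the details the paper compresses into ``follows immediately from the boundedness of the set of elementary matrices.''
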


\begin{proof}
For $a_1,a_2,a_3\in R$, $(a_1,a_2)$ is primitive if and only if $(a_1,a_2+a_1a_3)$ is primitive, and likewise for $(a_1+a_2a_3,a_2)$.
The boundedness condition follows immediately from the boundedness of the set of elementary matrices in $\SL_2(R)$.
\end{proof}

%
%

%
%

The following argument is due to Vaserstein \cite{V2}.

\begin{prop}
\label{bye-square}
For any ring $R$, we have
$$(1+t_1t_2,t_2^2t_3)\sim (1+t_1t_2,t_3).$$
%

\end{prop}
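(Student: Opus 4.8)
The plan is to verify the two clauses in the definition of $\sim$ separately. The primitivity clause is routine: from $(-t_1)t_2+(1+t_1t_2)=1$ the element $t_2$, hence also $t_2^2$, is a unit modulo $1+t_1t_2$, so for every $\ba=(a_1,a_2,a_3)\in R^3$ the ideals $(1+a_1a_2,\,a_2^2a_3)$ and $(1+a_1a_2,\,a_3)$ coincide; hence one pair is primitive exactly when the other is.

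For the boundedness clause I would first reduce to an explicit statement. Since ordered pairs of elements of $\SL_2(R)$ with a common first row form a bounded family, and since Lemma~\ref{equiv} gives symmetry and transitivity of $\sim$ while Lemma~\ref{symbol} lets us alter the second entry of a first row by any multiple of the first entry, it is enough to exhibit, for each admissible $\ba$, a \emph{single} pair $(X_1,X_2)\in\SL_2(R)^2$ with first rows $\begin{pmatrix}1+a_1a_2 & a_2^2a_3\end{pmatrix}$ and $\begin{pmatrix}1+a_1a_2 & a_3\end{pmatrix}$ for which $X_1=SX_2T$ with $S,T$ ranging over fixed bounded subsets of $\SL_2(R)$ independent of $\ba$.

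The engine is the polynomial matrix $g=\mm{1+t_1t_2}{t_2}{t_1}{1}=\mm1{t_2}01\,\mm1011\in\SL_2(\Z[t_1,t_2])$. As a product of two elementary matrices, $g$ and $g^{-1}=\mm1{-t_2}{-t_1}{1+t_1t_2}$ lie in bounded sets, so conjugation $X\mapsto gXg^{-1}$ is a bounded operation. A direct computation gives the exact identity $g\,\mm10s1\,g^{-1}=\mm{1+t_2s}{-t_2^2s}{s}{1-t_2s}$, which multiplies the upper-right entry by $-t_2^2$; put differently, modulo $1+t_1t_2$ conjugation by $g$ acts like conjugation by $\mm{t_2}00{t_2^{-1}}$ (using $-t_1\equiv t_2^{-1}$). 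Accordingly, starting from any completion $X_2=\mm{1+a_1a_2}{a_3}{c}{d}$ and applying the bounded right correction $\mm10{-a_1}1$, I expect $Y:=gX_2g^{-1}\,\mm10{-a_1}1$ to have first row $\begin{pmatrix}(1+a_1a_2)k & f\end{pmatrix}$ with $f\equiv a_2^2a_3\pmod{1+a_1a_2}$; since $Y\in\SL_2(R)$ the entries $k$ and $f$ are coprime to each other and to $1+a_1a_2$, and Lemma~\ref{symbol} would then correct $f$ to exactly $a_2^2a_3$ — once $Y$ has first row $\begin{pmatrix}1+a_1a_2 & f\end{pmatrix}$ rather than $\begin{pmatrix}(1+a_1a_2)k & f\end{pmatrix}$.

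The main obstacle is precisely the stray factor $k$: because $t_2$ is invertible only modulo $1+t_1t_2$ and not in $R$, conjugation lands on $(1+a_1a_2)k$ in the corner, and neither a unit correction nor Lemma~\ref{symbol} removes $k$ unless $k=1$. The heart of Vaserstein's argument is to force $k=1$, which I expect to require a judicious choice of the completion $X_2$ — after an initial left adjustment of its second row by an elementary matrix — and possibly a second, canceling conjugation by a $g^{-1}$-type matrix, all arranged so that every auxiliary factor stays in a fixed bounded set. Granting this, the pair $(Y,X_2)$, cleaned up via Lemma~\ref{symbol}, realizes the required first rows through a bounded family, which is the assertion $(1+t_1t_2,\,t_2^2t_3)\sim(1+t_1t_2,\,t_3)$; the bookkeeping in the elimination of $k$ is the only genuinely delicate step, the rest being formal.
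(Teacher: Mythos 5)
Your handling of the primitivity clause is correct and essentially identical to the paper's. But the boundedness clause is the entire content of the proposition, and your proposal does not establish it: the ``stray factor $k$'' you identify is exactly where the difficulty lives, and the sentence beginning ``Granting this'' grants the whole statement. Conjugation by $g=\mm{1+t_1t_2}{t_2}{t_1}{1}$ only realizes conjugation by $\operatorname{diag}(t_2,t_2^{-1})$ \emph{modulo} $1+t_1t_2$; nothing in your outline controls the actual $(1,1)$-entry after conjugation, and no choice of completion $X_2$ or ``second, canceling conjugation'' is exhibited that forces $k=1$. Lemma~\ref{symbol} can only adjust one entry of the first row by a multiple of the other, so it cannot strip off the factor $k$. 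As written, this is a plan with its crucial step missing rather than a proof.

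The paper's mechanism is genuinely different and does not pass through your $k$ at all. Setting $M=B\mm1101B^{-1}$ and $N=C\mm10{-1}1C^{-1}$, where $B$ carries indeterminates only in its first column and $C$ only in its second, both $M$ and $N$ are honest polynomial matrices (the determinant denominators cancel against the relevant cofactors), so the product $NMN\mm0{-1}10$ ranges over a bounded set; specializing all variables to the entries of a single $A\in\SL_2(R)$, the elementary identity $\mm10{-1}1\mm1101\mm10{-1}1\mm0{-1}10=I$ collapses this product to $AA^T$, so $\{AA^T\mid A\in\SL_2(R)\}$ is bounded. Writing $A=I+z_5\mm{z_1}{z_2}{z_3}{z_4}$, the off-diagonal entries of $AA^T-I$ are divisible by $z_5$, so conjugating by $\operatorname{diag}(z_5,1)$ stays polynomial and exhibits as bounded the set of products $XY$ with $X=\mm{1+z_5z_1}{z_5^2z_2}{z_3}{1+z_5z_4}$ and $Y=\mm{1+z_5z_1}{z_5^2z_3}{z_2}{1+z_5z_4}$. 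Boundedness of $XY$ makes the pair $(X,Y^{-1})$ bounded, and part (5) of Lemma~\ref{equiv} converts this to $(X,Y^T)$, whose first rows are $(1+ab,b^2c)$ and $(1+ab,c)$ after the substitution $z_1=a$, $z_2=c$, $z_5=b$ (the remaining variables being solvable from the determinant condition whenever $(1+ab,b^2c)$ is primitive). The $AA^T$ identity, together with the divisibility of the off-diagonal entries by $z_5$ that legitimizes the $\operatorname{diag}(z_5,1)$ conjugation, is the idea your proposal lacks.
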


\begin{proof}
Let $t_1,t_2,t_3$ map to $a,b,c\in R$ respectively.  It is clear that $(1+ab,b^2c)$ primitive implies $(1+ab,c)$ primitive.
Conversely, if $1+ab, b^2c\in J$ for some ideal $J\subsetneq R$, then for any maximal ideal $\fm$ containing $J$, we have $1+ab\in \fm$, hence $b\not\in \fm$,
so $c\in \fm$, which implies $(1+ab,c)$ is not primitive.

For $\mm {x_1}{y_2}{x_3}{y_4},\,\mm{y_1}{x_2}{y_3}{x_4}\in\GL_2(\Q(x_1,\ldots,x_4,y_1,\ldots,y_4))$, setting
$$M:=\mm {x_1}{y_2}{x_3}{y_4}\mm 1101\mm {x_1}{y_2}{x_3}{y_4}^{-1}, N:= \mm{y_1}{x_2}{y_3}{x_4}\mm 10{-1}1\mm{y_1}{x_2}{y_3}{x_4}^{-1},$$
we see that $M$ and $N$ lie in $\SL_2(\Z[x_1,x_3])$ and $\SL_2(\Z[x_2,x_4])$ respectively.  Writing
\begin{equation}
\label{weak-magic}
NMN\mm 0{-1}10 = \mm {P(x_1,x_2,x_3,x_4)}{Q(x_1,x_2,x_3,x_4)}{R(x_1,x_2,x_3,x_4)}{S(x_1,x_2,x_3,x_4)},
\end{equation}
the relation
$$\mm 10{-1}1\mm1101\mm 10{-1}1\mm 0{-1}10 = I$$
implies that
$P-1$, $Q$, $R$, and $S-1$ vanish at $(1,0,0,1)$.  Substituting
$$\mm {x_1}{x_2}{x_3}{x_4} = I + z_5\mm{z_1}{z_2}{z_3}{z_4},$$
we see that $Q$ and $R$ are divisible by $z_5$, so
\begin{equation}
\label{magic}
\mm {z_5}001NMN\mm 0{-1}10\mm {z_5}001^{-1} \in \SL_2(\Z[z_1,\ldots,z_5]).
\end{equation}

Specializing (\ref{weak-magic}) to the case $x_i = y_i = a_i$ for $A=\mm {a_1}{a_2}{a_3}{a_4}\in \SL_2(R)$, we have
\begin{align*}
NMN\mm 0{-1}10 &= A\mm 10{-1}1  \mm 1101   \mm 10{-1}1 A^{-1} \mm 0{-1}10 \\
&= A\mm01{-1}0 A^{-1} \mm 0{-1}10 = A A^T,
\end{align*}
so
$$\biggl\{A A^T\biggm| A\in \SL_2(R)\biggr\}$$
is bounded.  Further specializing to the case $A = I+z_5\mm {z_1}{z_2}{z_3}{z_4}\in \SL_2(R)$, (\ref{magic}) implies the boundedness of
$$\biggl\{\mm{1+z_5z_1}{z_5^2 z_2}{z_3}{1+z_5z_4} \mm{1+z_5z_1}{z_5^2 z_3}{z_2}{1+z_5z_4}
 \biggm| z_1+z_4 + z_5(z_1z_4-z_2z_3)=0\biggr\}$$
The family of pairs 
$$\biggl\{\biggl(\mm{1+z_5z_1}{z_5^2 z_2}{z_3}{1+z_5z_4},  \mm{1+z_5z_1}{z_5^2 z_3}{z_2}{1+z_5z_4}^{-1}\biggr)\biggr\}$$
subject to the condition 
\begin{equation}
\label{sl-cond}
z_1+z_4 + z_5(z_1z_4-z_2z_3)=0
\end{equation}
is therefore bounded, so by part (5) of Lemma~\ref{equiv},  the same is true for the family of pairs 
$$\biggl\{\biggl(\mm{1+z_5z_1}{z_5^2 z_2}{z_3}{1+z_5z_4},  \mm{1+z_5z_1}{z_5^2 z_3}{z_2}{1+z_5z_4}^T\biggr)\biggr\}$$
satisfying (\ref{sl-cond}).
If $(1+ab,b^2c)$ is primitive, then substituting $z_1=a$, $z_2=c$, $z_5=b$, we can solve (\ref{sl-cond}) for $z_3$ and $z_4$ in $R$, which proves the proposition.

\end{proof}

Henceforth, we assume $R$ is the ring of integers in an imaginary quadratic field $K$.

\begin{prop}
\label{bye-b}
For $R$ as above,
$$(1+t_1 t_2, t_2 t_3)\sim (1+t_1 t_2, t_3).$$
\end{prop}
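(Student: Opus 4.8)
Following the template of Proposition~\ref{bye-square}, I would first dispose of the primitivity clause, which needs nothing special about $R$: if $t_1,t_2,t_3$ map to $a,b,c\in R$, then $(1+ab,bc)$ primitive trivially implies $(1+ab,c)$ primitive, and if $1+ab$ and $bc$ both lie in a maximal ideal $\fm$, then $b\notin\fm$ (else $1\in\fm$), so $c\in\fm$; thus the two primitivity conditions are equivalent.

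\textbf{Reducing the boundedness statement.} Next I would peel off a square:
$$(1+t_1t_2,\,t_2t_3)\sim(1+t_1t_2,\,-t_1t_2^2t_3)\sim(1+t_1t_2,\,-t_1t_3),$$
the first relation by Lemma~\ref{symbol} (subtract $(1+t_1t_2)\,t_2t_3$ from the second coordinate, noting $t_2t_3-(1+t_1t_2)t_2t_3=-t_1t_2^2t_3$), the second by Proposition~\ref{bye-square} (applied to $-t_1t_2^2t_3=t_2^2(-t_1t_3)$). By transitivity, Proposition~\ref{bye-b} is therefore equivalent to
$$(1+t_1t_2,\,-t_1t_3)\sim(1+t_1t_2,\,t_3),$$
i.e.\ to the claim that the second coordinate may be multiplied by $-t_1$, which is a unit \emph{modulo} $1+t_1t_2$ (as $t_1$ and $1+t_1t_2$ generate the unit ideal) but not a unit of $R$. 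Using Lemma~\ref{symbol} and Proposition~\ref{bye-square} one can multiply the second coordinate by $t_1^2$, by $t_2^2$, or --- since $t_1t_2\equiv-1\pmod{1+t_1t_2}$ --- by $-t_1t_2$, but these do not appear to yield multiplication by $t_1$ alone: only an even power of a prime factor can be absorbed this way. So the remaining relation genuinely requires input specific to $R$.

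\textbf{The crux.} This is the step that forces $R$ to be the ring of integers in an imaginary quadratic field $K$ (when $R^\times$ is infinite one invokes Vaserstein's elementary generation, and Carter--Keller's argument is for $N\ge3$). Imitating Carter and Keller, the plan is to normalize the second coordinate: by Lemma~\ref{symbol} we may replace $t_3$, in both of $(1+t_1t_2,-t_1t_3)$ and $(1+t_1t_2,t_3)$, by any element congruent to it modulo $1+t_1t_2$; and since $(1+ab,c)$ is primitive, $c$ is invertible modulo $1+ab$, so Dirichlet's theorem on primes in residue classes of $\cO_K$ lets one take the representative to be a prime element $\pi$ of prescribed splitting type, coprime to $a$ and to $\bar a$ (no prime ideal divides both $1+ab$ and $c$). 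For such a representative I would aim to prove $(1+ab,-a\pi)\sim(1+ab,\pi)$ by another application of Proposition~\ref{bye-square} --- the coprimality making the congruence that brings the second coordinate into the form $v^2w$ solvable in $R$ --- and use the norm identity $a\bar a=N(a)\in\Z$ to trade the lone factor $a$ against a rational integer, at which point the arithmetic of $\Z$ (Vaserstein's treatment of $\SL_2(\Z)$) takes over. I expect this last step to be the real obstacle --- removing a single non-unit factor rather than a perfect square --- and within it the delicate point will be \emph{uniformity}: the prime $\pi$ and the data extracted from it must be made to vary in a bounded family as $(a,b,c)$ ranges over all triples with $(1+ab,bc)$ primitive. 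Everything preceding this step is formal.
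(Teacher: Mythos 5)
Your formal reductions are sound: the primitivity argument is the same as the paper's, and the chain $(1+t_1t_2,t_2t_3)\sim(1+t_1t_2,-t_1t_2^2t_3)\sim(1+t_1t_2,-t_1t_3)$ via Lemma~\ref{symbol} and Proposition~\ref{bye-square} is a correct (and slightly different) way of isolating the difficulty. You have also correctly diagnosed where the difficulty lies: Proposition~\ref{bye-square} only removes even powers, so one must somehow absorb a single non-square factor. But the proof stops exactly there. The paragraph labelled ``the crux'' is a plan, not an argument, and you say so yourself (``I would aim to prove\dots'', ``I expect this last step to be the real obstacle''). The specific route you sketch --- trading the lone factor $a$ against $N(a)=a\bar a\in\Z$ and then letting ``the arithmetic of $\Z$ take over'' --- is not developed and does not obviously lead anywhere: the pair $(1+ab,c)$ lives in $R$, not $\Z$, and multiplying the second coordinate by $\bar a$ to create the rational integer $a\bar a$ is itself an instance of the very operation (multiplication by a non-square non-unit) that you are trying to justify.

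What is actually needed, and what the paper supplies, is a reciprocity argument. After using Hasse's theorem three times to normalize $a=4q$ with $(q)$ an odd prime ideal and to replace $c$ by a prime $p\equiv 1\pmod{8q}$, one considers the Hilbert symbols $[-q,p]_v$. Hilbert reciprocity gives $\prod_v[-q,p]_v=1$, and --- this is the one place where the hypothesis that $K$ is imaginary quadratic is used --- the unique archimedean place is complex, so it contributes nothing. The local symbols at all places other than $(p)$ are shown to be trivial (odd places away from $p,q$ by Hensel, places over $2$ because $p\equiv1\pmod 8$, the place $q$ because $p\equiv1\pmod q$), whence $[-q,p]_{(p)}=1$ and $a=4q\equiv -r^2\pmod p$ for some $r\in R$. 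That congruence converts the offending factor into $-r^2$, i.e.\ into minus a perfect square, which $(1+ab,bc)\sim(1-r^2b,r^2b^2c)$ and Proposition~\ref{bye-square} then eliminate. (The sign matters: it is $-q$, not $q$, that is forced to be a square mod $p$, matching the $1-r^2b$ in the first coordinate.) Without this reciprocity step, or an equivalent substitute, the proposition is not proved; everything you wrote before it is, as you say, formal.
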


\begin{proof}

By Lemma~\ref{symbol}, for all $d\in R$
$$(1+ab,bc)\sim (1+ab,bc+(1+ab)bd) = (1+ab,b(c+(1+ab)d))$$
and
$$(1+ab,c)\sim (1+ab, c+(1+ab)d)),$$
so we may replace $c$ by any element in the same residue class (mod $1+ab$).  Since $(c,1+ab)$ is primitive, by Hasse's theorem (see \cite[Satz 13, p. 32]{H}) there exist infinitely many choices $d\in R$, such that  $c+(1+ab)d$ generates a prime ideal in $R$.  In particular, replacing $c$ by this element,
we may assume $c$ is relatively prime to $2a$.

We also have for all $e\in R$,
$$(1+ab,bc)\sim (1+ab+bce,bc) = (1+(a+ce)b,bc)$$
and
$$(1+ab,c)\sim (1+(a+ce)b,c).$$
Applying Hasse's theorem again, there exist infinitely many $e\in R$ such that $a+ ce$ is divisible by $4$, and $q := \frac{a+ce}4$ generates a prime ideal of $R$.  
We may therefore  assume $a=4q$ where $q$ generates a prime ideal not dividing $(2)$.
Finally, applying Hasse's theorem a third time, we may choose $p := c + (1+ab)f$ such that
$(p)$ is a prime ideal, and $p\equiv 1\pmod{8q}$. Using the same argument as above, we may replace $c$ by $p$.

For every place $v$ of $K$, let $[-q,p]_v$ denote the Hilbert symbol (which is $1$ if and only if $-qx^2+py^2 = 1$ has a solution in $K_v$ and is $-1$ otherwise).  By Hilbert reciprocity,
$$\prod_v [-q,p]_v = 1.$$
We can restrict the product to finite places of $K$ since the only infinite place is complex.
By Hensel's lemma, if $v$ does not lie over $2$, $-qx^2+py^2 =1$ has a solution in $K_v$ if and only if the reduced equation $-\bar q x^2 + \bar p y^2=1$ has a solution in the residue field $k_v$.  This holds automatically as long as $\bar q$ and $\bar p$ are non-zero in $k_v$, hence over all odd $v$ other than those corresponding to the prime ideals $q$ and $p$.    As $p\equiv 1\pmod 8$,
$\{1-py^2\mid y\in K_v\}$ contains a neighborhood of $0$ if $v$ lies over $2$, and it follows that
$-qx^2+py^2 = 1$ has a solution in $K_v$.  As $p\equiv 1\pmod q$, $-qx^2+py^2=1$ has a solution in the completion of $K$ at $q$.  We conclude that $[-q,p]_v = 1$ when $v$ is the place corresponding to $p$, so the image of $a=4q$ is congruent (mod $p$) to an element of the form $-r^2$, for some $r\in R$. Thus $a \equiv -r^2 \pmod{c}$.

As $ab\equiv -r^2b\pmod{bc}$, 
$$(1+ab,bc)\sim (1-r^2b,bc)\sim (1-r^2b,bc-(1-r^2b)bc) =
(1-r^2b,r^2 b^2 c)$$
Applying Proposition~\ref{bye-square}, this is boundedly equivalent to
$$(1-r^2b,c)\sim (1+ab,c),$$
and the proposition holds.
\end{proof}

For $n$ a non-negative integer and $\alpha = \begin{pmatrix} a&b\\ c&d\end{pmatrix}\in \SL_2(R)$, we write
$$\alpha^n =  \begin{pmatrix} a_n&b_n\\ c_n&d_n\end{pmatrix}.$$
Thus, for each $n$, $a_n$, $b_n$, $c_n$, and $d_n$ can be regarded as polynomials in $a$, $b$, $c$, and $d$ with integer coefficients.

\begin{prop}
\label{power-of-matrix}

For all $n$ and $\alpha$, we have $(a^n,b)\sim (a_n,b_n).$
\end{prop}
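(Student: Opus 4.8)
The plan is to describe $\alpha^n$ explicitly via the Cayley--Hamilton theorem and then to split the desired relation into a statement about the first coordinate, handled by Lemma~\ref{symbol}, and a statement about the second coordinate, handled by Proposition~\ref{bye-b}. Set $t := a + d = \Tr\alpha$, and let $p_0 = 0$, $p_1 = 1$, and $p_{m+1} = t p_m - p_{m-1}$ in $\Z[t]$. From $\alpha^2 = t\alpha - I$ one gets by induction $\alpha^n = p_n(t)\,\alpha - p_{n-1}(t)\,I$, so that $a_n = p_n(t)\,a - p_{n-1}(t)$ and $b_n = p_n(t)\,b$. The case $n = 0$ is immediate, since $(1, b) \sim (1, 0)$ by Lemma~\ref{symbol}; so I assume $n \ge 1$, in which case $p_n$ is monic of degree $n - 1 \ge 0$. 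It then suffices to establish $(a^n, b) \sim (a_n, b)$ and $(a_n, b) \sim (a_n, p_n(t)\,b) = (a_n, b_n)$ and chain them.

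The first relation is easy: reducing modulo $b$, the matrix $\alpha$ becomes lower triangular, so the $(1,1)$-entry of $\alpha^n$ reduces to $a^n$, i.e.\ $a_n \equiv a^n \pmod{b}$; writing $a_n = a^n + b h$, Lemma~\ref{symbol} (with $t_1, t_2, t_3$ specialized to $a^n$, $b$, $h$) gives $(a^n, b) \sim (a^n + b h, b) = (a_n, b)$.

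For the second relation I would factor $p_n(t) = q_1(t) \cdots q_r(t)$ into irreducible polynomials of $\Z[t]$ (possible with all $q_i$ monic, since $p_n$ is monic) and insert the factors $q_i$ into the second coordinate one at a time. The crucial input is that for each $i$ one has $a_n \equiv \varepsilon_i \pmod{q_i}$ for some $\varepsilon_i \in \{1, -1\}$: indeed, the polynomial identity $p_n^2 - t\,p_n p_{n-1} + p_{n-1}^2 = 1$ (a one-line induction on the recurrence) together with $q_i \mid p_n$ gives $p_{n-1}^2 \equiv 1 \pmod{q_i}$, and since $q_i$ is irreducible in the UFD $\Z[t]$ the quotient $\Z[t]/(q_i)$ is an integral domain, forcing $p_{n-1} \equiv \pm 1$ and hence $a_n = p_n a - p_{n-1} \equiv \mp 1 \pmod{q_i}$. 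Granting this, I argue by induction on $k$ that $(a_n, b) \sim (a_n, q_1 \cdots q_k\, b)$: writing $Y := q_1 \cdots q_k\, b$, if $a_n \equiv 1 \pmod{q_{k+1}}$ then $a_n = 1 + G q_{k+1}$ and Proposition~\ref{bye-b} (with $t_1, t_2, t_3 \mapsto G, q_{k+1}, Y$) yields $(a_n, Y) \sim (a_n, q_{k+1} Y)$; if instead $a_n \equiv -1 \pmod{q_{k+1}}$, one first passes to $(-a_n, -Y)$ by left multiplication by $-I \in \SL_2(\Z)$ (part (4) of Lemma~\ref{equiv}), applies Proposition~\ref{bye-b} to the now-unit congruence $-a_n \equiv 1 \pmod{q_{k+1}}$, and multiplies by $-I$ again, with the same net effect. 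Taking $k = r$ gives $(a_n, b) \sim (a_n, p_n(t)\,b)$, and combining with the first relation finishes the proof.

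The main obstacle is precisely the sign bookkeeping in the congruence $a_n \equiv \pm 1 \pmod{q_i}$: one cannot insert all of $p_n(t)$ at once via Proposition~\ref{bye-b}, because $a_n \equiv -p_{n-1}(t) \pmod{p_n(t)}$ and $-p_{n-1}(t)$ is not constant modulo $p_n(t)$ once $n \ge 3$, which is exactly why one must break $p_n$ into primes of $\Z[t]$ and work one prime at a time, and why the auxiliary conjugation by $-I$ is needed. The only remaining points are routine: that the pairs occurring along the way are primitive for the specializations of interest, and that Lemma~\ref{symbol}, Proposition~\ref{bye-b}, and Lemma~\ref{equiv} remain valid after specializing their formal variables to polynomials in $a, b, c, d$.
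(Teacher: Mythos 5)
Your proof is correct and follows essentially the same route as the paper: Cayley--Hamilton with the Chebyshev-like recurrence to get $a_n = p_n(t)a - p_{n-1}(t)$ and $b_n = p_n(t)b$, the identity $p_np_{n-2}=p_{n-1}^2-1$ (your symmetric form $p_n^2-tp_np_{n-1}+p_{n-1}^2=1$ is equivalent) to force $a_n\equiv\pm1$ modulo factors of $p_n$, Proposition~\ref{bye-b} together with the $-I$ trick of Lemma~\ref{equiv}(4) to strip $p_n(t)$ from the second coordinate, and finally $a_n\equiv a^n\pmod b$ with Lemma~\ref{symbol}. The only difference is organizational: the paper splits $p_n = v_nw_n$ into just two blocks (with $v_n\mid p_{n-1}-1$ and $w_n\mid p_{n-1}+1$) and applies Proposition~\ref{bye-b} twice, whereas you peel off one irreducible factor of $p_n$ at a time, determining the sign from $p_{n-1}^2\equiv 1$ in the domain $\Z[t]/(q_i)$.
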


\begin{proof}
We define a sequence of polynomials in $t$ as follows:
$$Q_{-1}=-1,\ Q_0 = 0,\ Q_{i+1} = tQ_{i} - Q_{i-1}\;\forall i\ge 0.$$
For $n\ge -1$, we set $u_i := Q_i(\Tr(\alpha))$.
By induction on $i$, we have
$$Q_{i+1}(t)Q_{i-1}(t) = Q_i(t)^2 - 1$$
for $i\ge 0$, so $u_{i}u_{i-2} = (u_{i-1}+1)(u_{i-1}-1)$ 
for all $i\ge 1$.  In particular, we can write $u_n = v_n w_n$, where $v_n$ divides $u_{n-1}-1$ and $w_n$ divides $u_{n-1}+1$.

By the Cayley-Hamilton theorem, $\alpha^2 = \Tr(\alpha)\alpha - I$, so for all $i\ge 1$,
$$\alpha^i = u_i \alpha - u_{i-1}I.$$
In particular,
$$a_n = a u_n - u_{n-1},\ b_n = bu_n = bv_n w_n,$$
so $a_n\equiv 1\pmod{w_n}$ and $a_n\equiv -1\pmod{v_n}$.
By Proposition~\ref{bye-b} and part (4) of Lemma~\ref{equiv},
$$(a_n,b_n)  \sim (a_n, bv_n) \sim (-a_n, -bv_n)\sim (-a_n, -b)\sim (a_n, b).$$
As $\alpha$ is upper triangular $\pmod b$, we have $a_n\equiv a^n\pmod b$, and the proposition follows.
\end{proof}

We recall the following result of Carter and Keller.

\begin{lem}
\label{CK-Lemma4}
(Carter-Keller, see \cite[Lemma 4, p.680]{CK})

Let $F$ be a number field, $\cO$ its ring of integers, and $m$ the number of roots of unity in $F$. Let $\mathfrak{a}$ be a nonzero ideal of $\cO$, and let $b$ be a nonzero element of $\cO$ such that
\begin{itemize}

\item [(i)] $b\cO$ is a prime ideal with residue characteristic prime to $m$, i.e., $b\cO$ is a prime ideal, and $\#(\cO/b\cO)$ and $m$ are relatively prime integers.

\item [(ii)] $\mathfrak{a}$ and $b\cO$ are comaximal, i.e., $\mathfrak{a} + b\cO = \cO$.

\end{itemize}
Then for every unit $u \in \cO^{\times}$, there exists an element $c \in \cO$ such that $bc \equiv u \pmod{\mathfrak{a}}$ and such that the greatest common divisor of $\epsilon(b)$ and $\epsilon(c)$ is $m\gamma$, where $\gamma$ is a positive integer all of whose rational prime divisors ramify in $F/\Q$ (that is, they divide the discriminant of $K$). Furthermore $c$ may be chosen such that $\gamma$ avoids any single rational prime which ramifies in $F/\Q$. Finally if the class number of $K$ is $1$, $c$ may be chosen such that $\gamma = 1$.

\end{lem}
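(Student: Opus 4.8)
This is Lemma~4 of \cite{CK}; the plan is to follow Carter and Keller's argument, whose structure is as follows. Since $\mathfrak a$ and $b\cO$ are comaximal, the image of $b$ in $\cO/\mathfrak a$ is a unit, so there is a $c_0\in\cO$ with $bc_0\equiv u\pmod{\mathfrak a}$, and because $u\in\cO^\times$ the element $c_0$ is itself a unit modulo $\mathfrak a$, i.e.\ $(c_0)+\mathfrak a=\cO$. Any $c$ in the residue class $c_0+\mathfrak a$ satisfies $bc\equiv u\pmod{\mathfrak a}$, so the task is reduced to finding one such $c$ whose ideal-theoretic behaviour makes $\gcd(\epsilon(b),\epsilon(c))$ equal to $m\gamma$ with $\gamma$ divisible only by ramified rational primes. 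To that end one enlarges the congruence on $c$: besides $c\equiv c_0\pmod{\mathfrak a}$ one also prescribes $c$ modulo $8$, modulo $b\cO$, and modulo one prime ideal lying over a chosen ramified prime, the extra conditions being arranged to control the residue characteristic of $c\cO$ and the power-residue symbols that will enter; the combined condition is still a single coprime residue class $c\equiv c_1\pmod{\mathfrak b}$.

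Next one invokes Hasse's theorem (\cite[Satz 13, p. 32]{H}), exactly as in the proof of Proposition~\ref{bye-b}, to produce infinitely many $c$ in the class $c_1+\mathfrak b$ generating prime ideals $c\cO=\mathfrak q$. By a standard density argument (Chebotarev) in the compositum of the ray class field of $F$ modulo $\mathfrak b$ with the cyclotomic fields $\Q(\mu_m)$ and $\Q(\mu_\ell)$ for $\ell$ in the finite set of primes dividing $\epsilon(b)$, one may further require that $\mathfrak q$ have residue degree $1$ and that the rational prime $p$ below it be prime to $m$ and satisfy $p\not\equiv1\pmod\ell$ for every unramified $\ell$ in that set. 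For unramified $\ell$ these conditions are mutually independent and hence jointly achievable; for $\ell$ ramified in $F/\Q$ the cyclotomic condition at $\ell$ is entangled with the splitting of $p$ in $F$ and need not be achievable, which is exactly why $\gamma$ may be divisible only by ramified primes and why any single prescribed ramified prime can be kept out of $\gamma$. When the class number of $F$ is $1$, the ray class field and the cyclotomic fields are disjoint enough over $\Q$ that the conditions at ramified primes can be met as well, so $\gamma$ may be taken to be $1$.

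Finally one reads off the greatest common divisor. Because $\mu_m\subseteq\cO^\times$ and the residue characteristics of both $b\cO$ and $c\cO$ are prime to $m$, the integer $m$ divides $\epsilon(b)$ and $\epsilon(c)$; and the congruences imposed on $p$ were designed precisely so that no unramified prime outside this $m$-part can divide $\gcd(\epsilon(b),\epsilon(c))$ (using the property of $\epsilon(\cdot)$ established in \cite{CK}, namely that $\epsilon(c)$ divides $\#(\cO/c\cO)-1=p-1$). Hence $\gcd(\epsilon(b),\epsilon(c))=m\gamma$ with $\gamma$ as asserted, while $bc\equiv u\pmod{\mathfrak a}$ holds by construction.

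The step I expect to be the main obstacle is the simultaneous realization of all the constraints on $c$ in the second paragraph: membership in a fixed coprime class modulo $\mathfrak a$, generation of a prime ideal, and prescribed behaviour of the residue characteristic modulo a finite set of primes. Hasse's theorem supplies the first two, but combining them with the third requires a density statement in a field built from the ray class field of $\mathfrak a$ and several cyclotomic fields, and it is the failure of these fields to be linearly disjoint over $\Q$ at primes ramifying in $F$ --- an obstruction ultimately governed by the ideal class group of $F$ --- that forces the possible extra factor $\gamma$ and makes it disappear precisely when the class number is one.
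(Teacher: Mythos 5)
The paper gives no independent proof of this lemma: it cites Carter--Keller's Lemma~4 verbatim and only observes, in Remark~\ref{Remark-Lemma4-CK}, that the hypothesis in \cite{CK} that $\mathfrak{a}$ be prime and principal is never used in their argument, so the statement holds for an arbitrary nonzero ideal. Your sketch is a faithful outline of that same Carter--Keller argument (assembling the congruence conditions into one coprime residue class, applying Hasse/Chebotarev to obtain a degree-one prime generator $c$ with $\epsilon(c)=p-1$, and locating the obstruction at ramified primes in the entanglement of the cyclotomic conditions with the ray class field, which produces the factor $\gamma$ and its disappearance when the class number is $1$), carried out for a general nonzero $\mathfrak{a}$ throughout, so it takes essentially the same approach as the paper.
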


\begin{rmk}
\label{Remark-Lemma4-CK}

The above lemma is Lemma 4 on page 680 in \cite{CK}. In Lemma 4 in \cite{CK}, Carter and Keller made an additional assumption that $\mathfrak{a}$ is a \textit{prime principal} ideal whereas in the above lemma, we do not impose such condition on $\mathfrak{a}$. In fact the proof of Lemma 4 in \cite{CK} given in pages 680--682 does not need such assumption, and thus the proof of the above lemma follows the same lines as that of Lemma 4 in \cite{CK}. Note that in the last paragraph of page 683, Carter and Keller applied Lemma 4 to a nonzero ideal $\mathfrak{a}$ which is not necessarily prime; so it seems that the assumption in Lemma 4 that $\mathfrak{a}$ is prime and principal is a typo in \cite{CK}.

\end{rmk}

\begin{prop}
\label{mth-powers}
Let $m = |R^\times|$.  If $ab\neq 0$ and $(a,b)$ is primitive, then $(a^m,b)\sim (1,0)$.
\end{prop}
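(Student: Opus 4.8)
The plan is to follow the architecture of the proof of Proposition~\ref{bye-b}: use Lemma~\ref{symbol} together with Hasse's theorem to put $b$ into a convenient form, then apply an $m$-th power reciprocity law, and finally reduce using the bounded relations already established. The key leverage is that $a^m$, being a perfect $m$-th power, has trivial $m$-th power residue symbol at every place, so reciprocity should transport that triviality onto the second coordinate.

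First I would replace $b$ by an element $p$ in the residue class of $b$ modulo $a^m$ (legitimate by Lemma~\ref{symbol}) which generates a prime ideal of residue characteristic prime to $2m$, does not divide the discriminant, and satisfies $p \equiv 1$ modulo a large power of each prime of $R$ above $m$; such a $p$ exists by Hasse's theorem. Next I would invoke Carter--Keller's Lemma~\ref{CK-Lemma4}, with the ideal there taken to be generated by a suitable divisor of $a$ (comaximal with $pR$ because $(a,b)$ is primitive), to manufacture an auxiliary element $c$ with prescribed reduction modulo $a$ and with $\gcd(\epsilon(p),\epsilon(c)) = m\gamma$, where $\gamma$ is supported on rational primes ramifying in $K/\Q$ and can be forced to avoid any one prescribed such prime. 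This element is what keeps the class-group contribution under control, so that the only obstruction that can survive lies in the group of $m$-th roots of unity; it plays here the role that the bounded elementary generation of $\SL_3(R)$ plays in \cite{CK}. The heart of the argument is then Hilbert (Artin) $m$-th power reciprocity applied to $(a^m,p)$: the unique infinite place of $K$ is complex and contributes nothing, the places above $m$ are neutralized by the congruence conditions on $p$, and every local symbol of $a^m$ is trivial because it is an $m$-th power, so reciprocity forces $p$ -- equivalently $b$ -- to be an $m$-th power residue modulo $a^m$. Writing $p \equiv s^m \pmod{a^m}$ with $\gcd(a,s)=1$, Lemma~\ref{symbol} gives $(a^m,b) \sim (a^m,s^m)$.

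It remains to trivialize a pair of coprime $m$-th powers $(a^m,s^m)$. The relevant bounded identity, obtained by iterating Proposition~\ref{bye-square} and applying Proposition~\ref{bye-b} once when $m$ is odd, is that $(1+t_1t_2,\,t_2^{m}t_3)\sim(1+t_1t_2,\,t_3)$ for every $m\ge 1$; combined with multiplication by a fixed matrix of $\SL_2(\Z)$ (part (4) of Lemma~\ref{equiv}) and with Proposition~\ref{power-of-matrix}, this is meant either to reduce $(a^m,s^m)$ directly to $(1,0)$ or to descend to a strictly smaller instance of the proposition, the base case $a\in R^\times$ being immediate since then $a^m=1$.

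The step I expect to be the main obstacle is exactly the control of the $\gamma$-term, that is, making the reciprocity computation close up when $R$ has class number greater than $1$: one application of Lemma~\ref{CK-Lemma4} kills only a single ramified prime, so one must either iterate -- clearing a different ramified prime on each pass and splicing the resulting bounded equivalences -- or carry out a finer analysis at the ramified places. A secondary nuisance is the behaviour at the primes above $m$, especially when $a$ is not coprime to $m$, where the property of being an $m$-th power is not faithfully detected by the residue symbol and must be handled by a direct Hensel-type argument. Boundedness itself, by contrast, I do not expect to cause trouble: every manipulation is performed through relations already known to be bounded, and the non-effective choices supplied by Hasse's theorem enter only as the free parameters $t_i$, over which the boundedness in the definition of $\sim$ is uniform.
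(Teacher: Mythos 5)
There is a genuine gap, and in fact the paper's proof of this proposition proceeds along entirely different lines. The central step of your plan---$m$-th power reciprocity applied to the pair $(a^m,p)$---is vacuous: since $a^m$ is an $m$-th power, \emph{both} global symbols $\bigl(\tfrac{a^m}{p}\bigr)_m$ and $\bigl(\tfrac{p}{a^m}\bigr)_m=\bigl(\tfrac{p}{a}\bigr)_m^{\,m}$ are trivially $1$, so the reciprocity law reads $1=1$ and carries no information; in particular it cannot force $p$ to be an $m$-th power residue modulo $a^m$ (a trivial power residue symbol at a prime $\mathfrak q$ only constrains $p$ modulo $m$-th powers in $(R/\mathfrak q)^\times$, which is far weaker than $p\equiv s^m\pmod{a^m}$). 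Note how this differs from the reciprocity argument in Proposition~\ref{bye-b}: there the symbol $[-q,p]_v$ is computed for an element $-q$ that is \emph{not} a power, all but one local factor is evaluated directly, and reciprocity pins down the remaining one. Your version degenerates because you made the first argument a perfect $m$-th power. Moreover, even granting $(a^m,b)\sim(a^m,s^m)$, your endgame does not close: the identity $(1+t_1t_2,t_2^mt_3)\sim(1+t_1t_2,t_3)$ requires the first coordinate to be $\equiv 1$ modulo the element being raised to the $m$-th power, and $a^m$ is not congruent to $1$ modulo $s$ in general; no descent structure is actually exhibited.

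The paper's proof uses no reciprocity here. It sets $\epsilon(k)$ to be the exponent of $(R/kR)^\times$ and, for each rational prime $p_i$ ramified in $K$, applies Lemma~\ref{CK-Lemma4} to $\mathfrak a=aR$ and $b$ with $u=-1$ to produce $c_i$ with $bc_i\equiv-1\pmod{a}$ and $\gcd(\epsilon(b),\epsilon(c_i))=m\gamma_i$ with $p_i\nmid\gamma_i$. Then $\gcd(\gamma_1,\dots,\gamma_\ell)=1$, so $\sum_i h_i\gamma_i=1$ and $M^m=\prod_i M^{mh_i\gamma_i}$ for $M\in\SL_2(R)$ with first row $(a\;\;b)$. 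By Proposition~\ref{power-of-matrix}, $M^{m|h_i\gamma_i|}$ is boundedly equivalent to $N_i^{m|h_i\gamma_i|}$, where $N_i$ has first row $(a\;\;b)$ and lower-left entry $c_i$. The decisive trick, entirely absent from your sketch, is to choose positive integers $s,t$ with $t-s=m\gamma_i$, $\epsilon(c_i)\mid s$, $\epsilon(b)\mid t$: then $(a^t,b)\sim(1,0)$ and $(a^s,c_i)\sim(1,0)$, so $N_i^t$ and (via the transpose and Lemma~\ref{equiv}(5)) $N_i^{-s}$ are bounded, hence so is $N_i^{m\gamma_i}=N_i^{t-s}$ and therefore $M^m$. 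So the Carter--Keller lemma is not a device for ``controlling the class group in a reciprocity computation''---it is the source of the exponents whose gcd structure lets one write $N_i^{m\gamma_i}$ as a product of two bounded powers. You correctly identified the $\gamma$-term as the crux, but the mechanism you would need is this $\epsilon$-divisibility/Bezout argument, not a finer local analysis at ramified places.
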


\begin{proof}

For $k\in R\setminus \{0\}$, let $\epsilon(k)$ denote the exponent of the finite group $(R/kR)^\times$.

Let $p_1, \ldots, p_{\ell}$ be the distinct prime divisors of the discriminant of $K/\Q$. For each $1 \le i \le \ell$, applying Lemma \ref{CK-Lemma4} for the ideal $a\cO$ and the element $b \in \cO$ with $u = -1$, one obtains an element $c_i \in \cO$ such that the following are satisfied:
\begin{itemize}

\item [(i)] $bc_i \equiv - 1 \pmod{a\cO}$; and

\item [(ii)] the greatest common divisor of $\epsilon(b)$ and $\epsilon(c_i)$ is $m\gamma_i$, where $\gamma_i$ is not divisible by the prime $p_i$ and all prime divisors of $\gamma_i$ divide the discriminant of $K/\Q$.

\end{itemize}

By (ii), note that $\gcd(\gamma_1, \ldots, \gamma_{\ell}) = 1$, and thus there are integers $h_1, \ldots, h_{\ell}$ such that
\begin{align*}
h_1\gamma_1 + \cdots + h_{\ell}\gamma_{\ell} = 1.
\end{align*}

For each $1 \le i \le \ell$, choose $d_i \in \cO$ so that
$$N_i = \begin{pmatrix} a & b \\ c_i & d_i \end{pmatrix}.$$

Choose $x, y \in R$ so that
$$M = \begin{pmatrix} a & b \\ x & y \end{pmatrix} \in \SL_2(R).$$

Then
$$M^m = M^{mh_1\gamma_1}\cdots M^{mh_{\ell}\gamma_{\ell}}.$$

We claim that $M^{mh_i\gamma_i}$ belongs to a bounded subset of $\SL_2(R)$ for all $1 \le i \le \ell$, and thus the same is true of $M^m$. This implies the proposition.

To prove the claim, take an integer $1 \le i \le \ell$. By Proposition \ref{power-of-matrix}, there exist bounded sets $U_1$, $V_1$ in $\SL_2(R)$ such that there exist $X_1 \in U_1$ and $Y_1 \in V_1$ for which 
$$X_1M^{m|h_i\gamma_i|}Y_1 = \begin{pmatrix} a^{m|h_i\gamma_i|} & b \\ x_1 & y_1 \end{pmatrix},$$
for some $x_1, y_1 \in R$. Using the same argument, there exist bounded sets $U_2$, $V_2$ in $\SL_2(R)$ such that there exist $X_2 \in U_2$, $Y_2 \in V_2$ for which
$$ X_2 \begin{pmatrix} a^{m|h_i\gamma_i|} & b \\ x_1 & y_1 \end{pmatrix} Y_2 = N_i^{m|h_i \gamma_i|}.$$

Let $s, t$ be positive integers such that $t - s = m\gamma_i$, $s$ is divisible by $\epsilon(c_i)$, and $t$ is divisible by $\epsilon(b)$. Then
$$(1, 0) \sim (a^t, b) \sim (a_t, b_t),$$
so $N_i^t$ belongs to a bounded subset of $\SL_2(R)$ which does not depend on $(a, b)$. Likewise,
$$(1, 0) \sim (a^s, c) \sim (a_s, c_s),$$
so $(N_i^T)^s$ belongs to a bounded subset of $\SL_2(R)$. By part (5) of Lemma ~\ref{equiv}, $N_i^{-s}$ belongs to a bounded subset of $\SL_2(R)$, so the same is true of $N_i^{m\gamma_i} = N_i^{t - s}$. Thus $N_i^{m|h_i \gamma_i|}$ belongs to a bounded subset of $\SL_2(R)$, so the same is true of $M^{m|h_i\gamma_i|}$. Therefore $M^{mh_i\gamma_i}$ belongs to a bounded subset of $\SL_2(R)$ for all $1 \le i \le \ell$, which proves our claim.

\end{proof}

\begin{thm}
\label{main-thm}

If $R$ is the ring of integers in an imaginary quadratic field, then $\SL_2(R)$ is polynomially parametrized.
\end{thm}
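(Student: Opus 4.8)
The goal is to show that every element of $\SL_2(R)$ lies in a single bounded set, which by the multiplicativity of boundedness reduces to controlling a fixed number of ``moves'' applied to an arbitrary $M = \begin{pmatrix} a & b \\ c & d\end{pmatrix} \in \SL_2(R)$. The plan is to first reduce to the case where $M$ has a suitably generic first row, then apply Proposition~\ref{mth-powers} to replace $a$ by $a^m$ up to bounded equivalence, and finally exploit the structure of $R^\times$ to conclude.

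\textbf{Step 1: reduce to a primitive first row with $ab\neq 0$.} Given $M$, the first row $(a,b)$ is automatically primitive. Using Lemma~\ref{symbol} (column operations) together with the boundedness of elementary matrices, I would arrange that $a \neq 0$ and $b \neq 0$; the only obstruction is that one of them vanishes, and since $(a,b)$ generates the unit ideal this is easily fixed by adding a multiple of the nonzero entry (note $(1,0)\sim(1,1)$, etc.). More precisely, every pair $(M,N)$ with coinciding first rows is bounded, so it suffices to produce, for each $M$, some $M'$ in a bounded set with $M' = XMY$ for $X,Y$ in bounded sets, and then iterate; the content is entirely in the first-row symbol $(a,b)$.

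\textbf{Step 2: kill the first entry up to bounded equivalence.} Since $(a,b)$ is primitive with $ab\neq 0$, Proposition~\ref{mth-powers} gives $(a^m,b)\sim(1,0)$ where $m = |R^\times|$. So it remains to pass from $(a,b)$ to $(a^m,b)$ boundedly. Here I would use that $R^\times$ is finite: writing $a = a\cdot 1$ and observing that raising to the $m$-th power is the obstruction, I would either invoke Proposition~\ref{power-of-matrix} directly (which relates $(a^n,b)$ to $(a_n,b_n)$, the entries of $\alpha^n$ for a matrix $\alpha$ with first row $(a,b)$) — this already tells us $(a^m,b)\sim (a_m,b_m)$, and $(a_m,b_m)$ is the first row of $M^m$ — together with the fact that $M\mapsto M^m$ need not be bounded, so instead one argues at the level of symbols. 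The clean route: $(1,0)\sim(a^m,b)\sim(a_m,b_m)$, and since $(1,0)$ is the first row of $I$, the set of $M$ whose $m$-th power's first row equals $(a_m,b_m)$... Actually the correct assembly is to note that $(a,b)\sim(a^m,b)$ fails in general, so one must instead directly show $\SL_2(R) = (\text{bounded})\cdot\{M : \text{first row of } M \text{ is } (1,0)\}\cdot(\text{bounded})$, using that matrices with first row $(1,0)$ are elementary.

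\textbf{Step 3: assemble.} A matrix with first row $(1,0)$ is $\begin{pmatrix} 1 & 0 \\ c & 1\end{pmatrix}$, an elementary matrix, hence lies in a bounded set. By Step 2 combined with Proposition~\ref{mth-powers} and Proposition~\ref{power-of-matrix}, for arbitrary $M$ with $ab\neq 0$ there are $X,Y$ in bounded sets with $XMY$ having first row $(1,0)$ — hence $XMY$ is elementary, hence bounded, hence $M = X^{-1}(XMY)Y^{-1}$ is bounded. The finitely many exceptional cases ($a=0$ or $b=0$) are handled by Step~1. Since all the bounded sets involved are independent of $M$, their product is a single bounded set containing all of $\SL_2(R)$, which is the definition of polynomial parametrization. \textbf{The main obstacle} I anticipate is Step~2: bridging from the ``symbol'' relation $(a^m,b)\sim(1,0)$ back to an honest statement about the full matrix $M$, since passing to $m$-th powers of matrices is not itself a bounded operation — the resolution is that the $\sim$ relation is defined precisely so that bounded equivalence of first rows lifts to bounded $X,Y$ conjugating the corresponding matrices, so one never actually forms $M^m$ but only uses Proposition~\ref{power-of-matrix}'s conclusion that $(a^n,b)\sim(a_n,b_n)$ in reverse.
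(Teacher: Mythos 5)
There is a genuine gap at your Step 2, and you in fact point at it yourself without closing it. Proposition~\ref{mth-powers} only applies to pairs whose first coordinate is already an $m$-th power: it says $(a^m,b)\sim(1,0)$. To use it on an arbitrary $M$ with primitive first row $(s,t)$, you must first show that $(s,t)$ is boundedly equivalent to some pair of the form $(a^m,d)$. You correctly observe that ``$(a,b)\sim(a^m,b)$ fails in general,'' but the proposed resolution --- ``one must instead directly show $\SL_2(R)=(\text{bounded})\cdot\{M:\text{first row }(1,0)\}\cdot(\text{bounded})$'' --- is just a restatement of the theorem, and the subsequent appeal to Proposition~\ref{power-of-matrix} ``in reverse'' does not produce the needed reduction either: that proposition relates $(a^n,b)$ to the first row of $\alpha^n$, which is useful only once you already have an $m$-th power in the first slot. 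Nothing in your argument manufactures that $m$-th power.

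The paper closes exactly this gap by citing Lemma~3 of Carter--Keller \cite{CK}: for any primitive $(s,t)$ there exist $a,d\in R$ with $(s,t)\sim(a^m,d)$, where $m=|R^\times|$. This is a genuine number-theoretic input (not a formal consequence of Lemma~\ref{symbol} or the boundedness of elementary matrices), and without it, or some substitute for it, your Step~3 assembly does not go through. Your Steps~1 and~3 are otherwise consistent with the paper's (very short) argument --- the whole theorem reduces to showing $(s,t)\sim(1,0)$ for primitive pairs, and matrices with first row $(1,0)$ are elementary hence bounded --- but the reduction to the form $(a^m,d)$ is the one indispensable step you are missing.
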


\begin{proof}
It suffices to prove that if $(s,t)\in R^2$ is primitive, then $(1,0)\sim (s,t)$.  By \cite[Lemma 3]{CK} , there exists $a,b\in R$ such that
$(s,t)\sim (a^m,d)$, where $m = |R^\times|$.  Proposition~\ref{mth-powers} implies that $(a^m,d)\sim (1,0)$.
\end{proof}

\begin{rmk}

If $A$ is a commutative ring such that $\SL_2(A)$ is polynomially parametrized, it is natural to ask what \textit{the smallest number of parameters is needed to polynomially parametrize $\SL_2(A)$.} We do not attempt to answer such question in this paper. With a detailed analysis of the proof of Theorem \ref{main-thm}, one can find an \textit{explicit} upper bound for the smallest number of parameters for parametrizing $\SL_2(R)$, where $R$ is the ring of integers of an imaginary quadratic number field. In this direction, there are few results in literature. Vaserstein \cite{V2} proves that an upper bound for the smallest number of parameters for parametrizing $\SL_2(\Z)$ is $46$. Morgan, Rapinchuk, and Sury \cite{MRS} show that if $\cO$ is a ring of $S$-integers in a number field $K$ such that the group of units $\cO^{\times}$ is infinite, then an upper bound for the smallest number of parameters for parametrizing $\SL_2(\cO)$ is $18$. Assuming the truth of Generalized Riemann Hypothesis, Cooke and Weinberger \cite{CW} obtained the same upper bound $18$. If $\cO$ is the ring of integers in a number field, Zannier \cite{Z} proves that a lower bound for the smallest number of parameters for parametrizing $\SL_2(\cO)$ is $4$. It is a natural question as to whether \textit{there exists a uniform bound for the number of parameters needed to parametrize $\SL_2(\cO)$, where $\cO$ is the ring of integers of an arbitrary number field.}

\end{rmk}

\subsection*{Acknowledgements} 

The second-named author thanks Andrei Rapinchuk for useful correspondence regarding Remark \ref{Remark-Lemma4-CK}.

\end{document}